\def\tank#1{\protected@xdef\@thanks{\@thanks
        \protect\footnotetext[0]{#1}}}
\def\bigfoot{

    \@footnotetext}
\newcommand{\ea}{\end{array}}
\newtheorem{thm}{Theorem}[section]
\newtheorem{prop}{Proposition}[section]
\newtheorem{rmk}{Remark}[section]
\newtheorem{ass}{Assumption}[section]
\numberwithin{equation}{section}
\newenvironment{proof}{Proof}{\hfill $\Box$}
\def\RR{\mathbb{R}}
\def\PP{\mathbb{P}}
\def\et{{\eta}}
\def\al{{\alpha}}
\def\ga{{\gamma}}
\def\al{{\alpha}}
\begin{document}

\title{\Large \bf Large deviation principles for SDEs under locally weak monotonicity conditions}
\date{}
\author{{Jian Wang}$^1$\footnote{E-mail:wg1995@mail.ustc.edu.cn}~~~{Hao Yang}$^1$\footnote{E-mail:yhaomath@ustc.edu.cn}~~~ {Jianliang Zhai}$^1$\footnote{E-mail:zhaijl@ustc.edu.cn}~~~ {Tusheng Zhang}$^{2}$\footnote{E-mail:Tusheng.Zhang@manchester.ac.uk}
\\
 \small  1. School of Mathematical Sciences,
 \small  University of Science and Technology of China,\\
 \small  Hefei, Anhui 230026, China.\\
 \small  2. Department of Mathematics, University of Manchester,\\
 \small  Oxford Road, Manchester, M13 9PL, UK.
}

\maketitle

\begin{center}
\begin{minipage}{130mm}
{\bf Abstract:}
This paper establishes a Freidlin-Wentzell large deviation principle for stochastic differential equations(SDEs) under locally weak monotonicity conditions and Lyapunov conditions. We illustrate the main result of the paper by showing that it can be applied to SDEs with non-Lipschitzian coefficients, which can not be covered in the existing literature. These  include the interesting biological  models like  stochastic Duffing-van der Pol oscillator model, stochastic SIR model, etc.

\vspace{3mm} {\bf Keywords:}
Freidlin-Wentzell large deviation principle; locally weak monotonicity condition; non-Lipschitzian coefficients.

\vspace{3mm} {\bf AMS Subject Classification:}
60F10; 60H10; 60H35.
\end{minipage}
\end{center}

\newpage

\renewcommand\baselinestretch{1.2}
\setlength{\baselineskip}{0.28in}
\section{Introduction and motivation}\label{Intr}
The small noise large deviation principle (LDP) for stochastic differential equations (SDEs) has a long history and has been studied by many authors. The general LDP was first
formulated by Varadhan \cite{vara} in 1966. Then after the pioneering work of the LDP on Markov process \cite{don&vara} and dynamical systems \cite{F-W} in the 1970s and 1980s, respectively, the LDP has attracted considerable attention as it deeply reveals the rules of the extreme events in risk management, statistical mechanics, informatics, quantum physics and so on.

SDEs with non-Lipschitzian coefficient often resulted in describing  important models in physics, engineering, biology etc. They are of constant interest. In this paper, we are concerned with the LDP for SDEs with non-Lipschitzian coefficient satisfying  locally weak monotonicity conditions  and Lyapunov conditions, see the precise assumptions  in Section 2. Our results can be applied to  a number of interesting biological models with nonlinear coefficients, such as the stochastic oscillation model, stochastic SIR model, stochastic Lorenz equation etc (see also [8]). 

As  we are aware of, there are not many results on the Freidlin-Wentzell type LDP for SDEs with non-Lipschitzian  coefficients.
Let us mention some. In 2005, the fourth named author and  Fang \cite{fang-zhang} used discretization and exponential equivalence  to establish the LDP for a class of SDE with  log-Lipschitz condition. Based on the weak convergence method,  the authors in \cite{R-Z&JFA} and \cite{R-Z&BSM} also obtained the LDP for SDEs under the log-Lipschitz condition. In 2020, Cheng and Huang \cite{C-H} studied the LDP for SDEs with the Dini continuity of the drift.  However,  the diffusion coefficients are required to be non-degenerate condition. We also refer the readers to \cite{rockner,lan,Z}  for related results.

 The existing results could not cover the situations where both the drifts and the diffusion coefficients are only H\"{o}lder continuous. In this paper, we establish a LDP for a class of SDE with locally weak
monotonicity conditions and Lyapunov conditions, see \eqref{21} and  \eqref{22} for the precise assumptions. Our framework is sufficiently general to include the cases of  H\"{o}lder continuous coefficients and  also can be applied to many interesting biological models, see exaples in Section 5.

To obtain our result, we will adopt the weak convergence approach introduced by Budhiraja, Dupuis and Maroulas in
\cite{dupuis} and \cite{BDM}. This is a challenging task because the locally weak monotonicity  coefficients can be  quite irregular and the Gronwall type equalities are not applicable. One of the difficulties is to prove that  the solutions of controlled equations converge to the solution of the skeleton in probability. This is mainly caused by the locally weak monotonicity condition, see  \eqref{21}. To overcome the difficulty, we construct some special control functions, apply stopping time techniques and use  a particularly suitable  sufficient condition proved in  \cite{zhang} to verify the criteria of Budhiraja-Dupuis-Maroulas.

The organization of the paper is as follows. In Section 2, we introduce the precise framework and the main result. Section 3
and Section 4 are devoted to the proof of the LDP for Eq. \eqref{12}. Some illustrating examples  are given in Section 5.

\section{The framework and main results}

Throughout this paper,  we will use the following notation. Let $(\mathbb{R}^d, \langle \cdot, \cdot \rangle, |\cdot|)$ be the \emph{d}-dimensional Euclidean space with the inner product $\langle \cdot, \cdot \rangle$ which induces the norm $|\cdot|$. The norm $\| \cdot \|$ stands for the Hilbert-Schmidt norm $\| \sigma \|^2 :=  \sum_{i=1}^{d} {\sum_{j=1}^{m}{\sigma_{ij}^2}}$ for any $d \times m$-matrix $\sigma = (\sigma_{ij}) \in \mathbb{R}^d \otimes \mathbb{R}^m$. ${A}^T$ stands for the transpose of the martix $A$. $A\cdot x$ denotes the product of the  matrix
$A$ and the vector $x$.

Let $(\Omega,\mathcal{F},\{{\mathcal{F}}_t\}_{t\geq 0},\mathbb{P})$  be a complete probability space with a filtration $\{{\mathcal{F}}_t\}_{t\geq 0}$ satisfying the usual conditions and $\{B(t)\}_{t \geq 0}$  a \emph{m}-dimensional Brownian motion on this probability space. Fix $T\in(0,\infty)$. Consider the following stochastic differential equation:
\begin{equation}\label{12}
dx^\epsilon(t)=b(t,x^\epsilon(t))dt+\sqrt{\epsilon}\sigma(t,x^\epsilon(t))dB(t),\ t\in[0,T], \quad x(0)=x_0,
\end{equation}
where the initial data $x_0 \in \mathbb{R}^d$. $\sigma : (t, x) \in [0, T] \times \mathbb{R}^d \mapsto \sigma (t,x) \in \mathbb{R}^d\otimes \mathbb{R}^m$ and $ b: (t,x) \in [0, T] \times \mathbb{R}^d \mapsto b(t,x) \in \mathbb{R}^d$ are Borel measurable functions which are continuous with respect to the space variable $x$.

Let us now introduce the following assumptions. Let $f,~g$ be nonnegative integrable functions on $[0,T]$.
\begin{ass}\label{ass0}
For arbitrary $R>0$,
\begin{equation*}
\int_0^T\sup_{|x|\leq R}(|b(s,x)|+\|\sigma(s,x)\|^2)ds<\infty.
\end{equation*}
\end{ass}
 \begin{ass}\label{ass1}
There exists $\epsilon_0\in(0,1)$ such that for arbitrary $R>0$, if $|x|\vee|y|\leq R$, $|x-y|\leq \epsilon_0$, the following locally weak monotonicity condition holds
\begin{equation}\label{21}
2\langle x-y,b(s,x)-b(s,y)\rangle+\|\sigma(s,x)-\sigma(s,y)\|^2\leq g(s)\eta_R(|x-y|^2),\ \forall s\in[0,T],
\end{equation}
where $\eta_R:[0,1)\rightarrow \mathbb R_+$ is an increasing, continuous function satisfying
\begin{equation}\label{24}
\eta _R(0) = 0,\quad \int_{0^+}\frac{dx}{\eta_R(x)}=+\infty.
\end{equation}
\end{ass}

\begin{ass}\label{ass2}
There exists $V\in C^2(\mathbb R^d;\mathbb R_+)$  and $\delta>0,~\eta>0$ such that
\begin{equation}\label{As b V}
\lim_{|x|\rightarrow+\infty}V(x)=+\infty,
\end{equation}
\begin{equation}\label{22}
\langle b(s,x),V_x(x)\rangle+\frac{\delta}{2}trace\big(V_{xx}(x)\sigma(s,x)\sigma^T(s,x)\big)+\frac{|\langle\sigma(s,x),V_x (x)\rangle|^2}{\eta V(x)}\leq f(s)\big(1+\gamma(V(x))\big),
\end{equation}
and
\begin{equation}\label{23}
trace\big(V_{xx}(x)\sigma(s,x)\sigma^T(s,x)\big)\geq 0,
\end{equation}
here $\gamma:[0,+\infty)\rightarrow\mathbb R_+$  is a continuous, increasing function satisfying
\begin{equation}\label{26}
\int_0^{+\infty}\frac{1}{\gamma(s)+1}ds=+\infty.
\end{equation}
Here $V_x$ and $V_{xx}$ stand for the first derivative and second derivative respectively.
\end{ass}

\begin{ass}\label{ass3} For any $0\leq c \leq 1$,
\begin{equation*}
\sup_{s\in [0, \epsilon_0]}\frac{c\eta_R(s)}{\eta_R(cs)}<\infty,\quad\ \sup_{s\in [0, \infty)}\frac{c\gamma(s)}{\gamma(cs)}<\infty.
\end{equation*}
Here $\epsilon_0$ is the constant appearing in Assumption \ref{ass1}.
\end{ass}

\begin{rmk}\label{re1}
The examples of the function {\rm  $\eta_R(s)$ in Assumption \ref{ass1} include $R s\log\frac{1}{s}$ and the examples of the function  $\gamma(s)$ in Assumption \ref{ass2}  include $s\log s+1$, etc..}
\end{rmk}
The following proposition gives the solution of the perturbation Eq. \eqref{12}. The proof is similar to that of Theorem 1.1 in \cite{wu}.
\begin{prop}\label{prop1}
For any $0<\epsilon<1$, under Assumptions \ref{ass0}, \ref{ass1} and \ref{ass2}, there exists a unique strong solution of Eq. \eqref{12}.
\end{prop}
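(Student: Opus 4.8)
The plan is to localize the coefficients, solve each localized equation by the Yamada--Watanabe principle, glue the solutions together up to an explosion time, and rule out explosion before $T$ with the Lyapunov function $V$ — the scheme used for Theorem~1.1 of \cite{wu}. For $R>0$, let $\pi_R$ be the radial retraction of $\RR^d$ onto $\{|x|\le R\}$ and put $b_R(s,x):=b(s,\pi_R(x))$, $\sigma_R(s,x):=\sigma(s,\pi_R(x))$. By Assumption~\ref{ass0} these are continuous in $x$ and dominated by $\varphi_R(s):=\sup_{|y|\le R}(|b(s,y)|+\|\sigma(s,y)\|^2)\in L^1[0,T]$. Solving the SDE with coefficients mollified in $x$ and using this $L^1(dt)$-domination for tightness in $C([0,T];\RR^d)$ (Aldous's criterion), a subsequential limit solves the corresponding martingale problem, so the truncated equation $dx=b_R(s,x)\,ds+\sqrt{\epsilon}\,\sigma_R(s,x)\,dB(s)$, $x(0)=x_0$, has a weak solution; it is automatically global on $[0,T]$ since $\varphi_R\in L^1[0,T]$.

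\textbf{Pathwise uniqueness of the truncated equation and gluing.} Given two solutions $x,y$ of the $R$-truncated equation with $x(0)=y(0)=x_0$, I would set $\theta_R:=\inf\{t:|x(t)|\vee|y(t)|\ge R\}$; on $[0,\theta_R]$ the coefficients coincide with $b,\sigma$, and as long as $|x-y|\le\epsilon_0$ too, \eqref{21} is available. Applying It\^o's formula to $|x(t)-y(t)|^2$ on $[0,\theta_R]$ stopped at the first time $|x-y|$ hits $\epsilon_0$, and using \eqref{21} together with $\epsilon<1$ (so $\epsilon\|\sigma(s,x)-\sigma(s,y)\|^2\le\|\sigma(s,x)-\sigma(s,y)\|^2$), one gets $\EE|x(t)-y(t)|^2\le\int_0^t g(s)\,\EE[\eta_R(|x(s)-y(s)|^2)]\,ds$ on the stopped interval; by \eqref{24} and a Bihari-type inequality this forces $x\equiv y$ on $[0,\theta_R]$, so in fact the two solutions never separate and the bound was not merely local. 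Yamada--Watanabe then yields a unique strong solution $x^{\epsilon,R}$ of the $R$-truncated equation. For $R<R'$, on the set where $x^{\epsilon,R'}$ stays in $\{|x|\le R\}$ it solves the $R$-truncated equation, hence by the uniqueness just shown it agrees with $x^{\epsilon,R}$ up to $\tau_R:=\inf\{t:|x^{\epsilon,R}(t)|\ge R\}$, and $\tau_R\le\tau_{R'}$; so the $x^{\epsilon,R}$ patch to a process $x^{\epsilon}$ on $[0,\zeta)$, with $\zeta:=\lim_{R\to\infty}\tau_R$, solving \eqref{12}.

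\textbf{Non-explosion.} Set $\Gamma(v):=\int_0^v\frac{ds}{1+\gamma(s)}$; by \eqref{26}, $\Gamma$ is increasing with $\Gamma(v)\to\infty$ as $v\to\infty$, and $\Gamma$ is concave since $\Gamma'=1/(1+\gamma)$ is non-increasing. I would apply It\^o's formula to $\Gamma(V(x^{\epsilon}(t\wedge\tau_R)))$ (approximating $\gamma$ by smooth functions if needed; on $[0,\tau_R]$ the process stays in a ball where $V\in C^2$ and $\sigma$ is bounded, and concavity of $\Gamma$ makes its second-order contribution nonpositive), and use \eqref{22}, \eqref{23} and $0<\epsilon<1$ — the term $|\langle\sigma,V_x\rangle|^2/(\eta V)$ in \eqref{22} and the sign condition \eqref{23} being what is used to absorb the It\^o correction and the $\tfrac{\epsilon}{2}{\rm trace}(V_{xx}\sigma\sigma^T)$ term — to conclude that the drift of $\Gamma(V(x^{\epsilon}(\cdot)))$ is at most $\Gamma'(V)f(s)(1+\gamma(V))=f(s)$. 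Hence $\EE[\Gamma(V(x^{\epsilon}(t\wedge\tau_R)))]\le\Gamma(V(x_0))+\int_0^T f(s)\,ds=:C_T$ for all $R$ and $t\le T$. On $\{\tau_R\le T\}$ we have $|x^{\epsilon}(\tau_R)|=R$, so $V(x^{\epsilon}(\tau_R))\ge m_R:=\inf_{|y|=R}V(y)$ with $m_R\to\infty$ by \eqref{As b V}; therefore $\PP(\tau_R\le T)\le C_T/\Gamma(m_R)\to0$, and since $\{\zeta\le T\}\subseteq\{\tau_R\le T\}$ for every $R$ we get $\PP(\zeta\le T)=0$. Thus $x^{\epsilon}$ is a strong solution of \eqref{12} on $[0,T]$, and uniqueness on $[0,T]$ follows from the pathwise uniqueness of the truncated equations by the same stopping argument.

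\textbf{Main obstacle.} The delicate step is non-explosion. Since $\gamma$ may be superlinear (e.g.\ $s\log s+1$), a direct moment bound for $V(x^{\epsilon})$ will not close via Gronwall; the transform $\Gamma$, whose unboundedness is guaranteed precisely by \eqref{26}, linearizes the estimate, and the heart of the matter is matching \eqref{22}--\eqref{23} against the It\^o correction so that the drift of $\Gamma\circ V\circ x^{\epsilon}$ collapses to $f(s)$ — this is where the otherwise mysterious term $|\langle\sigma,V_x\rangle|^2/(\eta V)$ earns its keep. A secondary subtlety is that \eqref{21} is doubly local, in $|x|\vee|y|$ and in $|x-y|$, so the uniqueness estimate must be run on the random interval before either solution leaves $\{|x|\le R\}$ or the difference reaches $\epsilon_0$, only after which one may let $R\to\infty$ and invoke \eqref{24}.
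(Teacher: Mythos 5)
The paper offers no proof of Proposition \ref{prop1} beyond citing Theorem 1.1 of \cite{wu}, so there is no internal argument to compare against; your truncate--solve--glue--Lyapunov scheme is the expected shape, and most of it is sound. One step, however, does not close as written. In the pathwise-uniqueness argument you take expectations to reach $\EE|x(t)-y(t)|^2\le\int_0^t g(s)\,\EE\big[\eta_R(|x(s)-y(s)|^2)\big]\,ds$ and then invoke a ``Bihari-type inequality''. To run Bihari/Osgood you must first replace $\EE[\eta_R(\cdot)]$ by $\eta_R(\EE[\cdot])$, which is Jensen and requires $\eta_R$ to be concave; Assumption \ref{ass1} only asks that $\eta_R$ be increasing, continuous, $\eta_R(0)=0$ and satisfy \eqref{24}, and passing to a concave majorant is not innocuous since the majorant may lose the Osgood property. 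The assumption-free fix --- the one the paper itself deploys in Section 4 for the analogous estimate --- is to apply the It\^{o}--Tanaka formula to $\phi_\delta(|x-y|^2)$ with $\phi_\delta(y)=\int_0^y \frac{ds}{\eta_R(s)+\delta}$, which is concave merely because $\eta_R$ is increasing; the local-time term is then nonpositive, $\phi_\delta'\,\eta_R\le 1$ bounds the drift by $\int_0^T g$, and $\delta\downarrow 0$ together with \eqref{24} forces $x\equiv y$ up to your $\theta_R$.

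A lesser point: your description of how the non-explosion drift estimate closes misattributes roles. It\^{o} applied to $\Gamma(V(x^\epsilon))$ gives drift $\Gamma'(V)\big[\langle V_x,b\rangle+\tfrac{\epsilon}{2}\mathrm{trace}(V_{xx}\sigma\sigma^T)\big]+\tfrac{\epsilon}{2}\Gamma''(V)\,|\sigma^T V_x|^2$; the second-order correction is already $\le 0$ by concavity of $\Gamma$, so the term $|\langle\sigma,V_x\rangle|^2/(\eta V)$ in \eqref{22} has nothing to do with killing it --- that term is nonnegative and is simply dropped from \eqref{22}, leaving $\langle V_x,b\rangle+\tfrac{\delta}{2}\mathrm{trace}(V_{xx}\sigma\sigma^T)\le f(s)(1+\gamma(V))$. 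The estimate then closes via \eqref{23} ($\mathrm{trace}\ge 0$) and $\epsilon\le\delta$, which replaces $\tfrac{\delta}{2}$ by $\tfrac{\epsilon}{2}$. Note this needs $\epsilon\le\delta$ rather than merely $\epsilon<1$; the paper's ``for any $0<\epsilon<1$'' tacitly presumes $\delta\ge 1$ (or should be read as ``for small $\epsilon$''), a harmless slip for the LDP since only $\epsilon\to 0$ matters. The cross term $|\langle\sigma,V_x\rangle|^2/(\eta V)$ earns its keep only later, in the controlled and skeleton equations of Sections 3 and 4, where it absorbs $\langle V_x,\sigma h\rangle$.
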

For each $h\in L^2([0,T],\mathbb R^m)$, consider the so called skeleton equation:
\begin{equation}\label{30}
dx^h(t)=b(t,x^h(t))dt+\sigma(t,x^h(t))\cdot h(t)dt,
\end{equation}
with the initial data $x^h(0)=x_0$.
We have the following result:
\begin{prop}\label{prop2}
Under Assumptions \ref{ass0}, \ref{ass1} and \ref{ass2}, there exists a unique solution to Eq. \eqref{30}.
\end{prop}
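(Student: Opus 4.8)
The plan is to treat existence and uniqueness separately. Uniqueness will rely entirely on the locally weak monotonicity condition \eqref{21}, while existence will come from Carathéodory's theorem once the Lyapunov estimate \eqref{22}--\eqref{23} is used to preclude explosion. Throughout one may assume $V\ge 1$: replacing $V$ by $V+1$ leaves $V_x$ and $V_{xx}$ unchanged, only enlarges the denominator $\eta V$ in \eqref{22}, and preserves \eqref{As b V}, \eqref{22} and \eqref{23}; this also makes the Young inequality below legitimate.

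For existence I would first note that $F(t,x):=b(t,x)+\sigma(t,x)\cdot h(t)$ is Borel measurable in $t$, continuous in $x$, and on every ball $\{|x|\le R\}$ is dominated by $s\mapsto\sup_{|y|\le R}|b(s,y)|+\big(\sup_{|y|\le R}\|\sigma(s,y)\|\big)|h(s)|$, which lies in $L^1([0,T])$ by Assumption \ref{ass0} and the Cauchy--Schwarz inequality (recall $h\in L^2$). Carathéodory's existence theorem thus yields an absolutely continuous local solution, which one extends to a maximal one on an interval $[0,\tau^*)\subseteq[0,T]$. For the a priori bound, the chain rule, \eqref{23}, and Young's inequality $\langle V_x,\sigma\cdot h\rangle\le\frac{|\langle\sigma,V_x\rangle|^2}{\eta V}+\frac{\eta}{4}V|h|^2$ turn \eqref{22} into
$$\frac{d}{dt}V(x^h(t))\le f(t)\big(1+\gamma(V(x^h(t)))\big)+\frac{\eta}{4}|h(t)|^2\,V(x^h(t))\qquad\text{for a.e. }t<\tau^*.$$
Setting $\chi(t):=V(x^h(t))\exp\!\big(\tfrac{\eta}{4}\int_t^T|h(s)|^2\,ds\big)\ge V(x^h(t))$ and using that $\gamma$ is increasing, one checks $\chi'(t)\le \exp(\tfrac{\eta}{4}\|h\|_{L^2}^2)\,f(t)\big(1+\gamma(\chi(t))\big)$; since $\int_0^{\infty}\frac{ds}{1+\gamma(s)}=+\infty$ by \eqref{26}, the Bihari inequality gives $\sup_{t<\tau^*}V(x^h(t))\le\Lambda$ for some finite $\Lambda=\Lambda(V(x_0),\|f\|_{L^1},\|h\|_{L^2})$. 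Hence $x^h$ stays in the compact set $\{V\le\Lambda\}$, where its derivative is dominated by an $L^1([0,T])$ function, so $x^h$ extends continuously up to $\tau^*$ with $x^h(\tau^*)\in\{V\le\Lambda\}$; one further application of local existence then contradicts maximality unless $\tau^*=T$, so the solution is defined on all of $[0,T]$.

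For uniqueness, let $x,y$ solve \eqref{30}; by the previous step both stay in some ball $\{|z|\le R\}$ on $[0,T]$. Put $\rho:=|x-y|^2$ and $\tau:=\inf\{t:|x(t)-y(t)|>\epsilon_0\}\wedge T$. On $[0,\tau]$, the bound $2\langle x-y,(\sigma(t,x)-\sigma(t,y))\cdot h\rangle\le\|\sigma(t,x)-\sigma(t,y)\|^2+|h(t)|^2\rho(t)$ together with \eqref{21} gives
$$\rho'(t)\le g(t)\,\eta_R(\rho(t))+|h(t)|^2\rho(t)\le\big(g(t)+|h(t)|^2\big)\,\widetilde\eta_R(\rho(t)),$$
where $\widetilde\eta_R(s):=\eta_R(s)+s$ still satisfies \eqref{24}. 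Since $\rho(0)=0$, $g+|h|^2\in L^1([0,T])$ and $\int_{0^+}\frac{ds}{\widetilde\eta_R(s)}=+\infty$, the Osgood--Bihari comparison argument forces $\rho\equiv 0$ on $[0,\tau]$; continuity of $t\mapsto|x(t)-y(t)|$ then upgrades this to $\tau=T$, i.e.\ $x\equiv y$ on $[0,T]$.

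The step I expect to be the main obstacle is the a priori non-explosion bound: the control contributes the extra summand $\tfrac{\eta}{4}|h|^2V$ in the differential inequality for $V(x^h)$, which is not absorbed by the right-hand side of \eqref{22}, so $V(x^h)$ cannot be fed directly into Bihari's inequality and the exponential change of variable above is needed first. A related, milder point is that \eqref{21} is assumed only on $\{|x-y|\le\epsilon_0\}$, which is precisely why the stopping time $\tau$ is introduced in the uniqueness argument.
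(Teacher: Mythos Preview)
The paper does not actually prove this proposition: it simply writes ``The proof of this proposition is similar to the proof of Theorem~1.1 in \cite{wu}, so we omit it here.'' Your sketch is a correct, self-contained argument and is exactly the kind of proof one would expect: Carath\'eodory for local existence, the Lyapunov condition plus Bihari for non-explosion, and the locally weak monotonicity plus Osgood for uniqueness.

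Two remarks on how your write-up compares with what the paper does elsewhere.  First, in the uniqueness step your claim that $\widetilde\eta_R(s)=\eta_R(s)+s$ still satisfies $\int_{0^+}\!ds/\widetilde\eta_R(s)=+\infty$ is correct but not completely automatic; it uses that $\eta_R$ is increasing (split into the cases $\liminf_{s\to0}\eta_R(s)/s=+\infty$ and $<\infty$).  If you prefer to avoid this, the same exponential trick you used for $V$ works here: with $\psi(t)=e^{-\int_0^t|h|^2}\rho(t)$ one gets $\psi'\le g(t)\,\eta_R\!\big(e^{\|h\|_{L^2}^2}\psi\big)$, and $s\mapsto\eta_R(e^{\|h\|_{L^2}^2}s)$ obviously satisfies Osgood by substitution.

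Second, your non-explosion argument via the weight $e^{\frac{\eta}{4}\int_t^T|h|^2}$ is a variant of the computation the paper carries out in the proof of Proposition~\ref{prop3}, where the weight $e^{-\eta\int_0^t|h|^2}$ is used instead and then composed with $\varphi(y)=\int_0^y\frac{ds}{1+\gamma(s)}$.  The paper's route needs Assumption~\ref{ass3} to pass from $\gamma(V)$ to $\gamma(e^{-\eta\int}V)$; your route avoids this because $\chi\ge V$ already puts the monotonicity of $\gamma$ on the useful side.  Since Proposition~\ref{prop2} does \emph{not} list Assumption~\ref{ass3} among its hypotheses, your version is in fact the cleaner one here.
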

The proof of this proposition is similar to the proof of Theorem 1.1 in \cite{wu}, so we omit it here.

We now formulate the main result in this paper.
\begin{thm}\label{thm}
For $\epsilon>0$, let $X^\epsilon$ be the solution to Eq. \eqref{12}. Suppose Assumptions \ref{ass0}, \ref{ass1}, \ref{ass2} and \ref{ass3}  are satisfied, then the family $\{X^\epsilon\}_{\epsilon>0}$ satisfies a large deviation principle on the space $C([0,T],\mathbb{R}^d)$ with the rate function $I:C([0,T],\mathbb{R}^d)\rightarrow [0,\infty]$, where
\begin{equation}\label{rate}
I(y)=\inf_{\left\{h \in L^2([0,T]; \mathbb{R}^m):\  y=x^h \right\}}\big\{\frac{1}{2}
\int_0^T|h(s)|^2ds\big\},
\end{equation}
with the convention $\inf\{\emptyset\}=\infty$, here $x^h\in C([0,T],\mathbb{R}^d)$ solves Eq. (\ref{30}).
\end{thm}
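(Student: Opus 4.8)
The plan is to apply the Budhiraja--Dupuis--Maroulas weak convergence criterion for large deviation principles, using the specific sufficient condition from \cite{zhang}. Recall that to establish the LDP with rate function \eqref{rate}, it suffices to verify two conditions: (i) \textbf{compactness of level sets of the rate function} --- for each $N<\infty$, the set $\{x^h : \tfrac12\int_0^T|h(s)|^2\,ds \le N\}$ is compact in $C([0,T],\mathbb{R}^d)$; and (ii) \textbf{weak convergence of controlled SDEs} --- if $h^\epsilon \to h$ in distribution as $S_N$-valued random variables (where $S_N$ is the closed ball of radius $\sqrt{2N}$ in $L^2([0,T];\mathbb{R}^m)$ equipped with the weak topology), then the solution $X^{\epsilon}$ of the controlled equation
\begin{equation*}
dX^{\epsilon}(t) = b(t,X^{\epsilon}(t))\,dt + \sigma(t,X^{\epsilon}(t))h^\epsilon(t)\,dt + \sqrt{\epsilon}\,\sigma(t,X^{\epsilon}(t))\,dB(t)
\end{equation*}
converges in distribution to $x^h$ solving \eqref{30}. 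Following \cite{zhang}, condition (ii) can itself be reduced to: (a) uniform (in $\epsilon$) Lyapunov/moment bounds that yield tightness of $X^{\epsilon}$, and (b) convergence \emph{in probability} of $X^{\epsilon}$ to $x^h$ along the original probability space when $h^\epsilon = h_\epsilon$ is a fixed sequence converging weakly to $h$.

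First I would establish the \emph{a priori estimates}. Using the Lyapunov function $V$ from Assumption \ref{ass2}, apply It\^o's formula to $\Lambda(V(X^{\epsilon}(t)))$ for a suitable concave transformation $\Lambda$ built from $\gamma$ via $\Lambda'(u) = (1+\gamma(u))^{-1}$ (so that $\int_0^\infty \Lambda'$ diverges by \eqref{26}); the term $\tfrac{\delta}{2}\,\mathrm{trace}(V_{xx}\sigma\sigma^T)$ together with the extra $|\langle\sigma,V_x\rangle|^2/(\eta V)$ term in \eqref{22} and condition \eqref{23} are precisely engineered so that the drift and the control-driven and (small) noise-driven It\^o corrections are absorbed, giving $\sup_\epsilon \EE[\sup_{t\le T}\Lambda(V(X^{\epsilon}(t)))] < \infty$ uniformly over $h^\epsilon \in S_N$; the control contribution $\langle \sigma\, h^\epsilon, V_x\rangle$ is controlled by Young's inequality against the $|\langle\sigma,V_x\rangle|^2/(\eta V)$ term at the cost of an $|h^\epsilon(s)|^2$ factor, which is integrable since $h^\epsilon \in S_N$. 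This, via \eqref{As b V}, confines $X^{\epsilon}$ to a large ball with high probability, and a similar computation gives an analogous bound for the skeleton $x^h$.

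The \textbf{main obstacle} is step (b): proving $X^{\epsilon} \to x^h$ in probability in $C([0,T],\mathbb{R}^d)$ when the locally weak monotonicity condition \eqref{21} only holds for $|x-y|\le\epsilon_0$ and only with the non-Lipschitz modulus $\eta_R$ --- so Gronwall is unavailable and one cannot a priori assume the difference stays small. The strategy here is to localize: introduce stopping times $\tau_R = \inf\{t : |X^{\epsilon}(t)| \vee |x^h(t)| > R\}$ to confine both processes to a ball of radius $R$ (on whose complement we have only small probability by step (a)), and a further stopping time $\rho_\epsilon = \inf\{t : |X^{\epsilon}(t) - x^h(t)| \ge \epsilon_0\}$. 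On $[0,\tau_R \wedge \rho_\epsilon]$, set $Z(t) = |X^{\epsilon}(t) - x^h(t)|^2$; It\^o's formula plus \eqref{21} gives $dZ(t) \le g(t)\eta_R(Z(t))\,dt + (\text{cross terms from the different controls } h^\epsilon \text{ vs } h) + (\text{martingale} + \sqrt\epsilon\text{-order terms})$. The cross term $2\langle X^{\epsilon}-x^h, \sigma(t,X^{\epsilon})h^\epsilon(t) - \sigma(t,x^h)h(t)\rangle$ is split as $2\langle X^{\epsilon}-x^h, (\sigma(t,X^{\epsilon})-\sigma(t,x^h))h^\epsilon(t)\rangle$, handled by the monotonicity-type bound absorbing part of it into $\eta_R$ at the cost of an $|h^\epsilon(s)|^2$-weighted term, plus $2\langle X^{\epsilon}-x^h, \sigma(t,x^h)(h^\epsilon(t)-h(t))\rangle$, which tends to zero because $h^\epsilon - h \to 0$ weakly and $s\mapsto \sigma(s,x^h(s))^T(X^{\epsilon}(s)-x^h(s))$ is (after a further approximation argument, replacing $x^h$ by a smooth adapted approximant and controlling the error) essentially fixed/equicontinuous. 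Then a Bihari--LaSalle inequality applied with $\eta_R$ (using \eqref{24}) in place of a linear modulus shows $\sup_{t\le \tau_R\wedge\rho_\epsilon} Z(t) \to 0$; Assumption \ref{ass3}, the sub-linear-scaling conditions on $\eta_R$ and $\gamma$, are what make the Bihari/Young-inequality bookkeeping with the extra multiplicative constants go through cleanly. It follows that $\rho_\epsilon > \tau_R$ with high probability, so the localization by $\rho_\epsilon$ is vacuous, and then letting first $\epsilon\to0$ and then $R\to\infty$ yields convergence in probability on all of $[0,T]$.

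Finally, for condition (i) I would note that the level set $\{x^h : \tfrac12\|h\|_{L^2}^2 \le N\}$ is the image of the weakly compact ball $S_N$ under the map $h \mapsto x^h$; continuity of this map (with $S_N$ weakly topologized) follows from the same estimates and the same localized Bihari argument as in step (b), now comparing $x^{h_n}$ and $x^h$ for $h_n \rightharpoonup h$ in $S_N$ --- indeed this is the $\epsilon = 0$ special case of (b) --- so the level set is compact, completing the verification of the Budhiraja--Dupuis--Maroulas criteria and hence the proof of Theorem \ref{thm}.
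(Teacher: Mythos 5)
Your overall plan — weak convergence method, Lyapunov bounds for a priori estimates, stopping time localization, Bihari-type argument with $\eta_R$ — matches the paper's in spirit. But there is a meaningful difference in the decomposition, and that difference is exactly where your sketch leaves a gap.

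The criterion from \cite{zhang} that the paper actually uses splits the Budhiraja--Dupuis condition into two \emph{separate} statements. First, for \emph{deterministic} controls $h_n\rightharpoonup h$ in $S^N$, the skeleton map is continuous: $x^{h_n}\to x^h$ in $C([0,T],\RR^d)$. Second, for \emph{the same random control} $h^\epsilon\in\tilde S^N$ in both equations, $Y^\epsilon-Z^\epsilon\to 0$ in probability, where $Y^\epsilon$ solves the controlled SDE with control $h^\epsilon$ and $Z^\epsilon=x^{h^\epsilon}$ is the skeleton with that \emph{identical} control. The crucial gain of the second statement is that when you apply It\^o to $|Y^\epsilon-Z^\epsilon|^2$, the control terms combine as $2\langle Y^\epsilon-Z^\epsilon,(\sigma(\cdot,Y^\epsilon)-\sigma(\cdot,Z^\epsilon))h^\epsilon\rangle$, which is absorbed (together with $-|h^\epsilon|^2|Y^\epsilon-Z^\epsilon|^2$ from an exponential weight) into $\|\sigma(\cdot,Y^\epsilon)-\sigma(\cdot,Z^\epsilon)\|^2$ by Cauchy--Schwarz, and the weak monotonicity \eqref{21} then closes the estimate. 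No term of the form $\sigma(\cdot,x^h)(h^\epsilon-h)$ ever appears. The weak-convergence cross term appears only in the \emph{deterministic} step, where it can be handled cleanly: by Lyapunov bounds plus Arzel\`a--Ascoli one extracts a subsequence $x^{h_n}\to\tilde x$, so the pairing $\int\langle\sigma(s,\tilde x(s))^T e_i,h_n-h\rangle ds$ is now against a fixed $L^2$ function and vanishes by definition of weak convergence.

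Your sketch instead compares $X^\epsilon$ (controlled by $h^\epsilon$) directly to $x^h$ (controlled by $h$). This puts the cross term $2\langle X^\epsilon-x^h,\sigma(\cdot,x^h)(h^\epsilon-h)\rangle$ inside the stochastic Bihari estimate, and that term cannot be dispatched by weak convergence alone: the pairing function $\sigma(s,x^h(s))^T(X^\epsilon(s)-x^h(s))$ is random and moves with $\epsilon$, which is precisely the quantity you are trying to control. You acknowledge this with ``(after a further approximation argument, replacing $x^h$ by a smooth adapted approximant)'', but that is the real work and it is not small; with Lipschitz coefficients one can close this with Gronwall, but under \eqref{21} the Bihari modulus makes an additive error term poisonous unless it is genuinely shown to vanish uniformly. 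The paper's decomposition simply avoids the issue. Two further technical points: your description of the \cite{zhang} reduction (``tightness of $X^\epsilon$'' plus ``convergence in probability of $X^\epsilon$ to $x^h$'') does not match what that theorem actually provides — it provides continuity of $\Gamma^0$ on $S^N$ plus closeness of $Y^\epsilon$ to $Z^\epsilon=x^{h^\epsilon}$; and your invocation of ``a Bihari--LaSalle inequality'' passes over the concave transformation $\phi_\delta(y)=\int_0^y(\eta_R(s)+\delta)^{-1}ds$ and the It\^o--Tanaka formula that the paper needs to make the Bihari reasoning compatible with the martingale part, since $\phi_\delta$ is only $C^1$ with a distributional second derivative. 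These are exactly where Assumption \ref{ass3} enters, via the scaling bound $\sup_{s}c\eta_R(s)/\eta_R(cs)<\infty$, to control the exponential reweighting inside $\phi_\delta'$. In short: your high-level architecture is sound, but the route you chose reintroduces the hardest cross term, and that is precisely the part you leave unproved.
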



\begin{proof}
According to Proposition \ref{prop2}, there exists  a measurable mapping $\varGamma^0(\cdot):C([0,T], \mathbb{R}^m)\rightarrow C([0,T], \mathbb{R}^d)$ such that $x^{h} = \varGamma^0 \big(\int_0^{\cdot}{h(s)ds}\big)$ for  $h\in L^2([0,T],\mathbb R^m)$.

Set
\begin{equation*}
S^N :=\{h\in L^2([0,T],\mathbb R^m):|h|_{L^2([0,T],\mathbb R^m)}^2\leq N\},
\end{equation*}
and
\begin{equation*}
\tilde{S}^N :=\{\phi : \phi \ is \ \mathbb{R}^m\text{-}valued \  {\mathcal{F}}_t\text{-}predictable\  process\  such\ that \  \phi(\omega) \in S^N,\  \mathbb{P}\text{-}a.s.\}.
\end{equation*}
Throughout this paper,  $S^N$ is endowed with the weak topology on $L^2([0,T],\mathbb R^m)$, under which $S^N$ is a compact Polish space.

By the Yamada-Watanabe theorem, the existence of a unique strong solution of Eq. \eqref{12} implies that for
every $\epsilon>0$, there exists a measurable mapping $\varGamma^{\epsilon}(\cdot):C([0,T], \mathbb{R}^m)\rightarrow C([0,T], \mathbb{R}^d)$ such that
\begin{equation*}
X^{\epsilon}=\varGamma^{\epsilon}( B(\cdot) ),
\end{equation*}
and applying the Girsanov theorem, for any $N>0$ and $h^{\epsilon}\in \tilde{S}^N$,
\begin{equation}\label{52}
Y^{\epsilon} := \varGamma^{\epsilon}\big( B(\cdot) + \frac{1}{\sqrt{\epsilon}}\int_0^{\cdot}{h^{\epsilon}(s)ds}\big)
\end{equation}
is the solution of the following SDE
\begin{equation}\label{32}
Y^{\epsilon}(t)=x+\int_0^tb(s,Y^{\epsilon}(s))ds+\int_0^t\sigma(s,Y^{\epsilon}(s))\cdot h^\epsilon(s)ds+\sqrt{\epsilon}\int_0^t\sigma(s,Y^{\epsilon}(s))dB(s).
\end{equation}
According to Theorem 3.2 in \cite{zhang}, Theorem \ref{thm} is established once we have proved:

(i) for every $N < +\infty$ and any family $\{h_n, n\in\mathbb{N}\} \subset S^N$ converging  weakly to some element $h$ as $n \rightarrow \infty$, $\varGamma^0 \big(\int_0^{\cdot}{h_n}(s)ds\big)$ converges to $\varGamma^0 \big(\int_0^{\cdot}{h(s)ds}\big)$ in the space $C([0,T],\mathbb{R}^d)$.

(ii) for every $N < +\infty$ and any family $\{h^{\epsilon}, \epsilon > 0\} \subset \tilde{S}^N$ and any $\delta > 0$,
\[ \lim_{\epsilon \rightarrow 0} \mathbb{P}\big(\rho(Y^{\epsilon},Z^{\epsilon})> \delta \big) = 0,\]
where $Z^{\epsilon}=\varGamma^0 \big(\int_0^{\cdot}{h^{\epsilon}(s)ds}\big)$ and $\rho(\cdot,\cdot)$ stands for the uniform metric in the space $C([0,T],\mathbb{R}^d)$.

Statement  (i) is verified in Proposition \ref{prop3} in Section 3 and statement (ii) is established in Proposition \ref{prop4} in Section 4.
\end{proof}

In the sequel, the symbol C will denote a positive generic constant whose value may change from place to place.

\section{Proof of statement (i)}
In this section, we will prove the following result.
\begin{prop}\label{prop3}
Suppose  Assumptions \ref{ass0}, \ref{ass1}, \ref{ass2} and \ref{ass3}  are satisfied and assume that $h_n,n\in\mathbb{N},~h\in S^N$, $\lim_{n\rightarrow\infty}h_n= h$ in the weak topology of $L^2([0,T],\mathbb R^m)$. Then $\lim_{n\rightarrow\infty}x^{h_n}= x^h$ in the space $C([0,T],\mathbb R^d)$, where $x^h$ solve Eq. \eqref{30}, and $x^{h_n}$ solve Eq. \eqref{30} with $h$ replaced by $h_n$.
\end{prop}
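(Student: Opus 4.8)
The plan is to prove convergence of the skeleton solutions by a compactness-plus-uniqueness argument combined with a quantitative pathwise estimate built from the locally weak monotonicity condition \eqref{21}. First I would establish a uniform (in $n$) a priori bound: applying It\^o's formula (here just the chain rule, since the equations are not stochastic) to $V(x^{h_n}(t))$ and using \eqref{22}, \eqref{23} together with the bound $|\sigma(s,x^{h_n}(s))\cdot h_n(s)|\le \|\sigma(s,x^{h_n}(s))\|\,|h_n(s)|$ and Young's inequality (absorbing the $\sigma\cdot h$ term into the $|\langle\sigma,V_x\rangle|^2/(\eta V)$ term at the cost of an extra $\eta|h_n(s)|^2 V(x^{h_n}(s))$ contribution), one gets a differential inequality of the form $\frac{d}{dt}V(x^{h_n}(t))\le C(f(s)+|h_n(s)|^2)(1+\gamma(V(x^{h_n}(t))))$. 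Using the Bihari--LaSalle inequality and \eqref{26} (plus $|h_n|_{L^2}^2\le N$), this yields $\sup_n\sup_{t\le T}V(x^{h_n}(t))\le C_N<\infty$, hence by \eqref{As b V} a uniform bound $\sup_n\|x^{h_n}\|_\infty\le R_N$. The same applies to $x^h$. Then Assumption \ref{ass0} gives uniform bounds on $\int_0^T|b(s,x^{h_n}(s))|ds$ and $\int_0^T\|\sigma(s,x^{h_n}(s))\|^2ds$, so the family $\{x^{h_n}\}$ is uniformly bounded and equicontinuous-in-an-$L^1$-modulus sense; a standard Arzel\`a--Ascoli argument (the drift part is absolutely continuous with uniformly integrable derivative, and the $\sigma\cdot h_n$ part is controlled by Cauchy--Schwarz in terms of $\int|h_n|^2$) shows $\{x^{h_n}\}$ is relatively compact in $C([0,T],\mathbb R^d)$.

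Next I would identify the limit. Let $x^{h_{n_k}}\to z$ in $C([0,T],\mathbb R^d)$ along a subsequence. Since $x^{h_{n_k}}$ stays in the ball of radius $R_N$, continuity of $b$ and $\sigma$ in $x$ gives $b(s,x^{h_{n_k}}(s))\to b(s,z(s))$ and $\sigma(s,x^{h_{n_k}}(s))\to\sigma(s,z(s))$ pointwise, dominated by the integrable envelope of Assumption \ref{ass0}; combined with the weak convergence $h_{n_k}\rightharpoonup h$ in $L^2$, a now-standard argument (write $\int_0^t\sigma(s,x^{h_{n_k}}(s))\cdot h_{n_k}(s)ds = \int_0^t[\sigma(s,x^{h_{n_k}}(s))-\sigma(s,z(s))]\cdot h_{n_k}(s)ds + \int_0^t\sigma(s,z(s))\cdot h_{n_k}(s)ds$, bound the first term by $\|h_{n_k}\|_{L^2}$ times an $L^2$-norm that vanishes by dominated convergence, and use weak convergence on the second) shows $z$ solves the skeleton equation \eqref{30} with control $h$. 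By Proposition \ref{prop2} the solution is unique, so $z=x^h$; since every subsequence has a further subsequence converging to the same limit $x^h$, the whole sequence converges, $x^{h_n}\to x^h$ in $C([0,T],\mathbb R^d)$.

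The step I expect to be the main obstacle is the a priori estimate via the Lyapunov function when the control $h_n$ is present: the term $\langle\sigma(s,x)\cdot h_n(s),V_x(x)\rangle$ must be absorbed using precisely the extra $|\langle\sigma,V_x\rangle|^2/(\eta V)$ term appearing in \eqref{22}, and one must check that the resulting coefficient multiplying $(1+\gamma(V))$ is still integrable in $s$ — this is where $h_n\in S^N$ (so $\int_0^T|h_n(s)|^2ds\le N$) and Assumption \ref{ass3} (the scaling condition on $\gamma$, needed to handle the constant $C$ multiplying $\gamma(V)$ in the Bihari argument) enter. A secondary subtlety is that \eqref{21} is only a \emph{local} monotonicity condition (valid for $|x-y|\le\epsilon_0$), but for this proposition we only need it indirectly — uniqueness of the skeleton solution from Proposition \ref{prop2} already encapsulates it — so the genuine difficulty of localizing via stopping times is deferred to the proof of statement (ii). One should nonetheless double-check that the compactness argument does not secretly need a contraction estimate; it does not, because identification of the limit uses only continuity of the coefficients and weak convergence, not monotonicity.
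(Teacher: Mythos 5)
Your overall blueprint matches the paper's: a uniform Lyapunov bound on $V(x^{h_n})$ plus Assumption \ref{ass0} give equicontinuity and relative compactness in $C([0,T],\mathbb R^d)$ via Arzel\`a--Ascoli, and then continuity of the coefficients, dominated convergence and the weak convergence $h_n\rightharpoonup h$ identify any subsequential limit as a solution of the skeleton equation, so uniqueness (Proposition \ref{prop2}) forces the whole sequence to converge to $x^h$. Your Step 2 is the paper's Step 2 in substance.

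The gap is in the Lyapunov estimate. After Young's inequality on $\langle V_x,\sigma\cdot h_n\rangle$ you are left with a term proportional to $|h_n(s)|^2\,V(x^{h_n}(s))$, and you then assert a differential inequality $\frac{d}{dt}V\le C\bigl(f(s)+|h_n(s)|^2\bigr)\bigl(1+\gamma(V)\bigr)$. That step needs $V\le C\bigl(1+\gamma(V)\bigr)$, which is \emph{not} implied by Assumptions \ref{ass2}--\ref{ass3}: both allow sublinear $\gamma$. For instance $\gamma(u)=\sqrt{u}$ satisfies \eqref{26} and the scaling condition of Assumption \ref{ass3} (indeed $\frac{c\sqrt{u}}{\sqrt{cu}}=\sqrt{c}$), yet $V\le C(1+\sqrt{V})$ fails for large $V$; $\gamma(u)=\log(1+u)$ is even worse. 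The argument is repairable --- take $\tilde\gamma(u)=1+\gamma(u)+u$ as the Bihari comparison function and check, using a dyadic split and monotonicity of $\gamma$, that $\int_1^\infty\frac{du}{1+\gamma(u)+u}=\infty$ whenever $\int_1^\infty\frac{du}{1+\gamma(u)}=\infty$ --- but as written the claimed inequality does not hold. The paper avoids the issue by a different device: it applies the chain rule to $\varphi\bigl(e^{-\eta\int_0^t|h_n(s)|^2 ds}\,V(x^{h_n}(t))\bigr)$ with $\varphi(y)=\int_0^y\frac{ds}{1+\gamma(s)}$. The exponential prefactor produces a term $-\eta|h_n|^2V$ that absorbs $\langle V_x,\sigma h_n\rangle$ via Young with \emph{no} leftover, and Assumption \ref{ass3} is then used precisely to bound $\frac{c(1+\gamma(u))}{1+\gamma(cu)}$ uniformly for $c=e^{-\eta\int_0^s|h_n|^2}\in[e^{-\eta N},1]$ --- not, as you suggest, to control a multiplicative constant in the Bihari integral test (a fixed constant in front of $\gamma$ never threatens \eqref{26}).
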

\begin{proof}
The proof is divided into two steps.
 \vskip 0.1cm

{\bf Step 1:} We prove that the family $\{x^{h_n},n\in\mathbb{N}\}$ is tight in $C([0,T],\mathbb R^d)$.
 \vskip 0.1cm

Define $\varphi(y)=\int_0^y\frac{1}{\gamma(s)+1}ds$ and $w^{h_n}(t)=e^{-\eta\int_0^{t}|h_n(s)|^2ds}V(x^{h_n}(t))$, where $\eta > 0,~\gamma,$ and $V$ are in Assumption \ref{ass2}. Apply the chain rule to get
\begin{eqnarray}
 &&\varphi(w^{h_n}(t))\nonumber\\
 &=& \varphi(V(x_0))+\int_0^{t} \varphi^\prime(w^{h_n}(s))\cdot e^{-\eta\int_0^s|h_n(r)|^2dr} \cdot\big[-\eta|h_n(s)|^2V(x^{h_n}(s)) \nonumber\\
  &&+\langle V_x(x^{h_n}(s)),b(s,x^{h_n}(s))\rangle
+\langle V_x(x^{h_n}(s)),\sigma(s,x^{h_n}(s))\cdot h_n(s)\rangle\big]ds\nonumber\\
  &\leq& \varphi(V(x_0))+\int_0^{t} \varphi^\prime(w^{h_n}(s))\cdot e^{-\eta\int_0^s|h_n(r)|^2dr} \cdot\big[\langle V_x(x^{h_n}(s)),b(s,x^{h_n}(s))\rangle \nonumber\\
  &&+\frac{|\langle\sigma(s,x^{h_n}(s)),V_x (x^{h_n}(s))\rangle|^2}{\eta V(x^{h_n}(s))}\big]ds\nonumber\\
  &\leq& \varphi(V(x_0))+\int_0^{t} \varphi^\prime(w^{h_n}(s))\cdot e^{-\eta\int_0^s|h_n(r)|^2dr} \cdot f(s)\big(1+\gamma(V(x^{h_n}(s)))\big)ds\nonumber\\
   &\leq& \varphi(V(x_0))+C\int_0^Tf(s)ds.\label{34}
\end{eqnarray}
Assumption \ref{ass3} has been used in getting the last inequality.

 The above inequality (\ref{34}) yields
\begin{equation*}
  \sup_{n\in\mathbb{N}}\sup_{t\in[0,T]}V(x^{h_{n}}(t))<\infty.
\end{equation*}
By the condition (\ref{As b V}) on the function $V$, we deduce that
\begin{equation}\label{37}
  \sup_{n\in\mathbb{N}}\sup_{t\in[0,T]}|x^{h_n}(t)|\leq L
\end{equation}
for some constant $L>0$.

From the proof of (\ref{37}) we also see  that
\begin{eqnarray}\label{eq bound}
\sup_{\chi\in S^N}\sup_{t\in[0,T]}|x^{\chi}(t)|\leq L_1
\end{eqnarray}
for some constant $L_1$. This fact will be used in next section.

For $0\leq s<t\leq T$,
\begin{equation*}
x^{h_n}(t)-x^{h_n}(s)=\int_s^tb(r,x^{h_n}(r))dr+\int_s^t\sigma(r,x^{h_n}(r))\cdot h_n(r)dr.
\end{equation*}
By the H\"{o}lder inequality and \eqref{37} we have
\begin{eqnarray*}
   |x^{h_n}(t)-x^{h_n}(s)|
&\leq&
  \int_s^t\sup_{|x|\leq L}|b(r,x)|dr
  +
  \big(\int_s^t\sup_{|x|\leq L}\|\sigma(r,x)\|^2dr\big)^{\frac{1}{2}}\big(\int_s^t|h_n(r)|^2dr\big)^{\frac{1}{2}}\\
&\leq&
   \int_s^t\sup_{|x|\leq L}|b(r,x)|dr
  +
  \big(\int_s^t\sup_{|x|\leq L}\|\sigma(r,x)\|^2dr\big)^{\frac{1}{2}}N^{\frac{1}{2}}.
\end{eqnarray*}
By  Assumption \ref{ass0} it follows that, for any $\kappa>0$, there exists $\theta>0$ such that for any $0\leq s<t\leq T$ with $t-s\leq \theta$,
\begin{equation}\label{38}
  \sup_{n\in\mathbb{N}}|x^{h_n}(t)-x^{h_n}(s)|\leq \kappa.
\end{equation}
By \eqref{37}, \eqref{38}, and the Arzela-Ascoli theorem, $\{x^{h_n},n\in\mathbb{N}\}$ is tight in the space $C([0,T],\mathbb R^d)$. Hence there exists a subsequence of $x^{h_n}$ (still denoted as $x^{h_n}$)  and $\tilde{x}\in C([0,T],\mathbb{R}^d)$ satisfying
\begin{equation}\label{39}
  x^{h_n}\rightarrow \tilde{x} \quad \text{in}~C([0,T],\mathbb{R}^d).
\end{equation}

\vskip 0.1cm
{\bf Step 2:} We verify that $\tilde{x}=x^h$, completing the proof.

Recall that $x^{h_n}$ is the unique solution of
\begin{equation}\label{eq u hn}
x^{h_n}(t)=x+\int_0^tb(s,x^{h_n}(s))ds+\int_0^t\sigma(s,x^{h_n}(s))\cdot h_n(s)ds,\quad t\in[0,T].
\end{equation}
By the dominated convergence theorem and the continuity of $b$ with respect to the second variable,
\begin{equation}\label{41}
  \lim_{n\rightarrow\infty}\int_0^t|b(s,x^{h_n}(s))-b(s,\tilde{x}(s))|ds=0.
\end{equation}

Next, we prove
\begin{eqnarray}\label{eq limit sigma}
\lim_{n\rightarrow\infty}\int_0^t\big[\sigma(s,x^{h_n}(s))\cdot h_n(s)-\sigma(s,\tilde{x}(s))\cdot h(s)\big]ds=0.
\end{eqnarray}

Note that
\begin{eqnarray}\label{eq sigma}
  && \big|\int_0^t\big[\sigma(s,x^{h_n}(s))\cdot h_n(s)-\sigma(s,\tilde{x}(s))\cdot h(s)\big]ds\big|\nonumber\\
  &\leq&\big|\int_0^t\big(\sigma(s,x^{h_n}(s))-\sigma(s,\tilde{x}(s))\big)\cdot h_n(s)ds\big|+
  \big|\int_0^t\sigma(s,\tilde{x}(s))\cdot (h_n(s)-h(s))ds\big|\nonumber\\
 &=&I_1+I_2.
\end{eqnarray}
 By the H\"{o}lder inequality, the dominated convergence theorem and \eqref{39},
 \begin{eqnarray}\label{eq sigma 1}
 I_1
 &\leq&
 \big(\int_0^t\|\sigma(s,x^{h_n}(s))-\sigma(s,\tilde{x}(s))\|^2ds\big)^{\frac{1}{2}}\big(\int_0^t|h_n(s)|^2ds\big)^{\frac{1}{2}}\nonumber\\
  &\leq&
  \big(\int_0^t\|\sigma(s,x^{h_n}(s))-\sigma(s,\tilde{x}(s))\|^2ds\big)^{\frac{1}{2}}N^{\frac{1}{2}}
 \rightarrow 0,\ \ \text{as }n\rightarrow\infty.
 \end{eqnarray}
Let $e_i$, $1\leq i \leq d$, be the canonical basis of $\RR^d$.
 Since $h_n\rightarrow h$ weakly in $L^2([0,T],\mathbb{R}^d)$, we derive that for each $1\leq i \leq d$,
\begin{equation*}
\langle\int_0^t\sigma(s,\tilde{x}(s))\cdot (h_n(s)-h(s))ds,e_i\rangle\rightarrow 0,
 \end{equation*}
 which further implies that $\lim_{n\rightarrow\infty}I_2=0$. Combining (\ref{eq sigma}) and (\ref{eq sigma 1}), we obtain (\ref{eq limit sigma}).

Now Let $n\rightarrow\infty$ in (\ref{eq u hn}) to see that $\tilde{x}$ is the solution of the equation:
\begin{equation}\label{40}
\tilde{x}(t)=x+\int_0^tb(s,\tilde{x}(s))ds+\int_0^t\sigma(s,\tilde{x}(s))\cdot h(s)ds,\quad t\in[0,T].
\end{equation}
By the uniqueness,  we have $\tilde{x}=x^h$, completing the proof.

\end{proof}
\section{Proof of statement (ii)}
Let $Y^{\epsilon},Z^{\epsilon}$ be defined as in Section 2. We have the following result.
\begin{prop}\label{prop4}
Suppose the Assumptions \ref{ass0}, \ref{ass1}, \ref{ass2}, and \ref{ass3}  are satisfied, then $\rho(Y^{\epsilon},Z^{\epsilon})\rightarrow 0$ in probability.
\end{prop}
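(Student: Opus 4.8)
The plan is to show that $\rho(Y^\epsilon,Z^\epsilon)\to 0$ in probability by combining a localization argument (stopping times that keep both processes in a bounded region), a careful estimate of the difference $Y^\epsilon-Z^\epsilon$ using the locally weak monotonicity condition \eqref{21}, and a Bihari-type comparison that exploits \eqref{24}. The main obstacle is precisely that \eqref{21} only holds for $|x-y|\le\epsilon_0$ and gives a concave modulus $\eta_R$ rather than a linear one, so neither It\^o's formula combined with Gronwall's lemma nor a naive exponential-martingale bound applies directly; one must instead run a Bihari/Osgood argument and control the probability that $|Y^\epsilon-Z^\epsilon|$ ever exceeds $\epsilon_0$.

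First I would establish uniform moment/boundedness estimates for $Y^\epsilon$ analogous to \eqref{eq bound}: using the Lyapunov function $V$ and Assumption \ref{ass2}, apply It\^o's formula to $\varphi\big(e^{-\eta\int_0^t|h^\epsilon(s)|^2ds}V(Y^\epsilon(t))\big)$ with $\varphi(y)=\int_0^y\frac{ds}{\gamma(s)+1}$, exactly as in the proof of Proposition \ref{prop3}, the extra $\sqrt\epsilon$ stochastic integral being handled by \eqref{22} (the term $\frac{|\langle\sigma,V_x\rangle|^2}{\eta V}$ absorbs the It\^o correction from the control and, for $\epsilon\le\delta$, the genuine diffusion term is controlled by $\frac{\delta}{2}\,\mathrm{trace}(V_{xx}\sigma\sigma^T)$, using also \eqref{23}) and then taking expectations to kill the martingale part. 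This yields a constant $L_2$ and, for each $\kappa>0$, a set of large probability on which $\sup_{t\le T}|Y^\epsilon(t)|\le L_2$; together with \eqref{eq bound} we may work on the event where both $Y^\epsilon$ and $Z^\epsilon$ are bounded by some $R:=L_1\vee L_2$, so that $\eta_R$ is the relevant modulus. Define $\tau^\epsilon_R=\inf\{t:|Y^\epsilon(t)|>R \text{ or } |Z^\epsilon(t)|>R\}\wedge T$ and $\tau^\epsilon_0=\inf\{t:|Y^\epsilon(t)-Z^\epsilon(t)|\ge\epsilon_0\}\wedge T$, and set $\tau^\epsilon=\tau^\epsilon_R\wedge\tau^\epsilon_0$.

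Next I would estimate $\xi^\epsilon(t):=|Y^\epsilon(t\wedge\tau^\epsilon)-Z^\epsilon(t\wedge\tau^\epsilon)|^2$. Writing the SDE \eqref{32} for $Y^\epsilon$ and \eqref{eq u hn}-type equation for $Z^\epsilon$ (with the same control $h^\epsilon$), It\^o's formula gives
\[
\xi^\epsilon(t)=\int_0^{t\wedge\tau^\epsilon}\!\Big(2\langle Y^\epsilon-Z^\epsilon,\,b(s,Y^\epsilon)-b(s,Z^\epsilon)\rangle+2\langle Y^\epsilon-Z^\epsilon,\,(\sigma(s,Y^\epsilon)-\sigma(s,Z^\epsilon))h^\epsilon(s)\rangle+\epsilon\|\sigma(s,Y^\epsilon)\|^2\Big)ds+M^\epsilon_t,
\]
where $M^\epsilon$ is a martingale. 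On $[0,\tau^\epsilon]$ the first bracket is at most $g(s)\eta_R(\xi^\epsilon(s))$ by \eqref{21} (after splitting off $\|\sigma(s,Y^\epsilon)-\sigma(s,Z^\epsilon)\|^2$, which is $\ge 0$, from the left side of \eqref{21}), the cross term with $h^\epsilon$ is bounded by $|h^\epsilon(s)||Y^\epsilon-Z^\epsilon|\,\|\sigma(s,Y^\epsilon)-\sigma(s,Z^\epsilon)\|$ and, using Assumption \ref{ass3} to compare $\eta_R(cs)$ with $c\eta_R(s)$ together with Young's inequality, can also be dominated by $C(g(s)+|h^\epsilon(s)|^2)\eta_R(\xi^\epsilon(s))$, and the last term is at most $\epsilon C_R$. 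Taking expectations (the martingale drops out, or one localizes further), one gets
\[
\EE\,\xi^\epsilon(t)\le \epsilon\,C_R T+\int_0^t\big(C g(s)+C|h^\epsilon(s)|^2\big)\,\EE\,\eta_R(\xi^\epsilon(s))\,ds;
\]
since $\eta_R$ is concave, Jensen gives $\EE\,\eta_R(\xi^\epsilon(s))\le\eta_R(\EE\,\xi^\epsilon(s))$, and since $\int_0^T(g(s)+|h^\epsilon(s)|^2)ds\le \|g\|_{L^1}+N$ is bounded uniformly, Bihari's inequality together with the Osgood condition $\int_{0^+}dx/\eta_R(x)=+\infty$ forces $\sup_{t\le T}\EE\,\xi^\epsilon(t)\to 0$ as $\epsilon\to 0$. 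Finally, to remove the stopping time: $\{\rho(Y^\epsilon,Z^\epsilon)>\delta\}$ is contained (for $\delta<\epsilon_0$) in $\{\tau^\epsilon_R<T\}\cup\{\sup_{t\le T}\xi^\epsilon(t\wedge\tau^\epsilon)>\delta^2\}$; the first event has probability $\le\PP(\sup|Y^\epsilon|>R)+\PP(\sup|Z^\epsilon|>R)$, which is small by the Lyapunov bounds (and vanishes for $Z^\epsilon$ once $R\ge L_1$), and for the second one applies Doob's/Markov's inequality to the estimate above, possibly keeping the martingale term and using the Burkholder-Davis-Gundy inequality on $M^\epsilon$ to bound $\EE\sup_{t\le T}\xi^\epsilon(t\wedge\tau^\epsilon)$ rather than just $\sup_t\EE\,\xi^\epsilon(t)$. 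Letting $\epsilon\to0$ and then $R\to\infty$ completes the proof. The delicate points are the Young-inequality rearrangement of the $h^\epsilon$ cross term into the form $(\cdot)\,\eta_R(\xi^\epsilon)$, which is exactly where Assumption \ref{ass3} is needed, and the passage from the stopped estimate back to the unstopped event, which is where the uniform Lyapunov bound on $Y^\epsilon$ is essential.
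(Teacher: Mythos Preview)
Your overall architecture (stopping times, Lyapunov bound on $Y^\epsilon$, then a Bihari-type estimate on $\xi^\epsilon=|Y^\epsilon-Z^\epsilon|^2$) matches the paper's, but the handling of the control cross term has a real gap. After Young's inequality the drift of $\xi^\epsilon$ is bounded by
\[
2\langle Y^\epsilon-Z^\epsilon,\,b(\cdot,Y^\epsilon)-b(\cdot,Z^\epsilon)\rangle+\|\sigma(\cdot,Y^\epsilon)-\sigma(\cdot,Z^\epsilon)\|^2+|h^\epsilon|^2\xi^\epsilon
\le g(s)\,\eta_R(\xi^\epsilon)+|h^\epsilon(s)|^2\,\xi^\epsilon,
\]
so the residual is $|h^\epsilon|^2\xi^\epsilon$, not $|h^\epsilon|^2\eta_R(\xi^\epsilon)$. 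Assumption~\ref{ass3} does \emph{not} let you convert one into the other: it bounds $c\,\eta_R(s)/\eta_R(cs)$ for each fixed $c\in(0,1]$, but not uniformly in $c$, and in particular does not yield $s\le C\eta_R(s)$ on $[0,\epsilon_0]$ (take $\eta_R(s)=s^2$, which satisfies both Assumption~\ref{ass1} and Assumption~\ref{ass3}, yet $s/\eta_R(s)=1/s\to\infty$). Since the hypotheses give no separate control on $\|\sigma(\cdot,Y^\epsilon)-\sigma(\cdot,Z^\epsilon)\|$, there is no way to absorb this term into $\eta_R(\xi^\epsilon)$. A second problem is that $h^\epsilon\in\tilde S^N$ is a \emph{random} predictable process, so your displayed post-expectation inequality with $|h^\epsilon(s)|^2$ sitting outside $\EE$ is ill-posed, and the subsequent deterministic Bihari step has no meaning. (Also, $\eta_R$ is only assumed increasing, not concave, so the Jensen step is an additional unstated hypothesis.)

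The paper's remedy is to apply It\^o's formula to the \emph{weighted} process $e^{-\int_0^t|h^\epsilon(r)|^2dr}\,\xi^\epsilon(t)$: differentiating the exponential produces $-|h^\epsilon|^2\xi^\epsilon$, which cancels the residual above exactly, leaving only $g(s)\eta_R(\xi^\epsilon)+\epsilon\|\sigma(s,Y^\epsilon)\|^2$ in the drift. One then composes \emph{pathwise}, before taking expectation, with the concave function $\phi_\delta(y)=\int_0^y\frac{du}{\eta_R(u)+\delta}$ via the It\^o--Tanaka formula; concavity of $\phi_\delta$ requires only that $\eta_R$ be increasing, so no Jensen step is needed. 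Assumption~\ref{ass3} enters only to bound $\phi_\delta'(e^{-c}\xi)\cdot e^{-c}\eta_R(\xi)\le C$ for $c\in[0,N]$, which is precisely the scaling comparison it provides. After expectation one first lets $\epsilon\to0$ (killing the $\epsilon/\delta$ term) and then $\delta\to0$, invoking the Osgood condition~\eqref{24}. Your Lyapunov argument for $\PP(\tau^\epsilon_R\le T)$ is essentially the paper's and is fine.
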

\begin{proof} Recall that $\epsilon_0$ is the constant appeared in Assumption \ref{ass1}.
For $R>0$, $0<p\leq \epsilon_0$, define
\begin{equation*}
\tau^{\epsilon}_R=\inf\{t\geq0:|Y^{\epsilon}(t)|\geq R\},\quad \tau^\epsilon_{p}=\inf\{t\geq0:|Y^{\epsilon}(t)-Z^{\epsilon}(t)|^2\geq p\}.
\end{equation*}
By (\ref{eq bound}), there exists a constant $L>0$ such  that
$$\sup_{\epsilon>0}\sup_{t\in[0,T]}|Z^{\epsilon}(t)|\leq L.$$

 We first prove that, for  $R>2L$,
\begin{equation}\label{43}
  \lim_{\epsilon\rightarrow 0}\PP\big(\tau^\epsilon_{p}\leq \tau^{\epsilon}_R\wedge T\big)=0.
\end{equation}
Set $\phi_{\delta}(y)=\int_0^y\frac{1}{\eta_R(s)+\delta}ds$. Applying the It\^{o} formula gives
\begin{eqnarray}\label{44}
 &&\hspace{-1truecm}e^{-\int_0^t|h^\epsilon(s)|^2ds}|Y^\epsilon(t)-Z^\epsilon(t)|^2\\\nonumber
 &=&\!\!\! \int_0^te^{-\int_0^s|h^\epsilon(r)|^2dr} \cdot\Big\{-|h^\epsilon(s)|^2|Y^\epsilon(s)-Z^\epsilon(s)|^2
 +
 2\langle Y^\epsilon(s)-Z^\epsilon(s),b(s,Y^\epsilon(s))-b(s,Z^\epsilon(s))\rangle\\\nonumber
  &&\ \ \ \ \ \ \ \  +2\langle Y^\epsilon(s)-Z^\epsilon(s),[\sigma(s,Y^\epsilon(s))-\sigma(s,Z^\epsilon(s))]\cdot h^\epsilon(s)\rangle
                     +\epsilon\|\sigma(s,Y^\epsilon(s))\|^2\Big\}ds\\\nonumber
  &&+2\sqrt{\epsilon}\int_0^te^{-\int_0^s|h^\epsilon(r)|^2dr}\langle\sigma(s,Y^\epsilon(s)),Y^\epsilon(s)-Z^\epsilon(s)\rangle dB(s).
\end{eqnarray}

Since $\phi_\delta(x)$ is a concave function on the interval $[0,\epsilon_0)$ and $\lim_{x\rightarrow 0}\phi^\prime_\delta(x)=\frac{1}{\delta}$, there exists a concave extension of $\phi_\delta(x)$ on the real line denoted by $\bar\phi_\delta(x)$ satisfying $\bar\phi_\delta(x)=\phi_\delta(x)$ on $[0,\epsilon_0)$. The second order derivative $\phi^{\prime\prime}_\delta(x)$ of  $\phi_\delta(x)$ in the sense of distributions is a non-positive Radon measure. Applying the It\^{o}-Tanaka formula to \eqref{44} gives
\begin{eqnarray*}
 &&\bar\phi_\delta(e^{-\int_0^t|h^\epsilon(s)|^2ds}|Y^\epsilon(t)-Z^\epsilon(t)|^2)\\
 &=& \int_0^t\bar\phi^{\prime}_\delta(e^{-\int_0^s|h^\epsilon(r)|^2dr}|Y^\epsilon(s)-Z^\epsilon(s)|^2)\cdot e^{-\int_0^s|h^\epsilon(r)|^2dr} \cdot\{-|h^\epsilon(s)|^2|Y^\epsilon(s)-Z^\epsilon(s)|^2\\
 &&+2\langle Y^\epsilon(s)-Z^\epsilon(s),b(s,Y^\epsilon(s))-b(s,Z^\epsilon(s))\rangle\\
  &&+2\langle Y^\epsilon(s)-Z^\epsilon(s),[\sigma(s,Y^\epsilon(s))-\sigma(s,Z^\epsilon(s))]\cdot h^\epsilon(s)\rangle+\epsilon\|\sigma(s,Y^\epsilon(s))\|^2\}ds\\
  &&+\sqrt{\epsilon}\int_0^t\big[\bar\phi^{\prime}_\delta(e^{-\int_0^s|h^\epsilon(r)|^2dr}|Y^\epsilon(s)-Z^\epsilon(s)|^2)\cdot e^{-\int_0^s|h^\epsilon(r)|^2dr}\cdot\langle\sigma(s,Y^\epsilon(s)),Y^\epsilon(s)-Z^\epsilon(s)\rangle\big]dB(s)\\
  &&+\int_{-\infty}^{+\infty}\Lambda_t(x)\bar\phi^{\prime\prime}_\delta(dx),
\end{eqnarray*}
where the nonnegative random field $\Lambda=\{\Lambda_t(x,\omega);(t,x,\omega)\in[0,+\infty)\times \mathbb{R}\times\Omega\}$ is the local time of the semimartingale $e^{-\int_0^\cdot|h^\epsilon(s)|^2ds}|Y^\epsilon(\cdot)-Z^\epsilon(\cdot)|^2$. Noticing
$$
\int_{-\infty}^{+\infty}\Lambda_t(x)\bar\phi^{\prime\prime}_\delta(dx)\leq 0,
$$
we derive that , for the  stopping time $\hat{\tau}^\epsilon:=T\wedge \tau^\epsilon_R\wedge\tau^\epsilon_{p}$,
\begin{eqnarray*}
&&\bar\phi_\delta(e^{-\int_0^{\hat{\tau}^\epsilon}|h^\epsilon(s)|^2ds}|Y^\epsilon(\hat{\tau}^\epsilon)-Z^\epsilon(\hat{\tau}^\epsilon)|^2)\\
 &\leq& \int_0^{\hat{\tau}^\epsilon}\bar\phi^{\prime}_\delta(e^{-\int_0^s|h^\epsilon(r)|^2dr}|Y^\epsilon(s)-Z^\epsilon(s)|^2)\cdot e^{-\int_0^s|h^\epsilon(r)|^2dr} \cdot\{-|h^\epsilon(s)|^2|Y^\epsilon(s)-Z^\epsilon(s)|^2\\
 &&+2\langle Y^\epsilon(s)-Z^\epsilon(s),b(s,Y^\epsilon(s))-b(s,Z^\epsilon(s))\rangle\\
  &&+2\langle Y^\epsilon(s)-Z^\epsilon(s),[\sigma(s,Y^\epsilon(s))-\sigma(s,Z^\epsilon(s))]\cdot h^\epsilon(s)\rangle+\epsilon\|\sigma(s,Y^\epsilon(s))\|^2\}ds\\
  &&+\sqrt{\epsilon}\int_0^{\hat{\tau}^\epsilon}\big[\bar\phi^{\prime}_\delta(e^{-\int_0^s|h^\epsilon(r)|^2dr}|Y^\epsilon(s)-Z^\epsilon(s)|^2)\cdot e^{-\int_0^s|h^\epsilon(r)|^2dr}\cdot\langle\sigma(s,Y^\epsilon(s)),Y^\epsilon(s)-Z^\epsilon(s)\rangle\big]dB(s)\\
  &\leq&\int_0^{\hat{\tau}^\epsilon}\bar\phi^{\prime}_\delta(e^{-\int_0^s|h^\epsilon(r)|^2dr}|Y^\epsilon(s)-Z^\epsilon(s)|^2)\cdot e^{-\int_0^s|h^\epsilon(r)|^2dr} \cdot\{\epsilon\|\sigma(s,Y^\epsilon(s))\|^2\\
 &&+2\langle Y^\epsilon(s)-Z^\epsilon(s),b(s,Y^\epsilon(s))-b(s,Z^\epsilon(s))\rangle+\|\sigma(s,Y^\epsilon(s))-\sigma(s,Z^\epsilon(s))\|^2\}ds\\
  &&+\sqrt{\epsilon}\int_0^{\hat{\tau}^\epsilon}\big[\bar\phi^{\prime}_\delta(e^{-\int_0^s|h^\epsilon(r)|^2dr}|Y^\epsilon(s)-Z^\epsilon(s)|^2)\cdot e^{-\int_0^s|h^\epsilon(r)|^2dr}\cdot\langle\sigma(s,Y^\epsilon(s)),Y^\epsilon(s)-Z^\epsilon(s)\rangle\big]dB(s)\\
  &\leq&\int_0^{\hat{\tau}^\epsilon}\bar\phi^{\prime}_\delta(e^{-\int_0^s|h^\epsilon(r)|^2dr}|Y^\epsilon(s)-Z^\epsilon(s)|^2)\cdot e^{-\int_0^s|h^\epsilon(r)|^2dr} \cdot[\epsilon\|\sigma(s,Y^\epsilon(s))\|^2\\
  &&+g(s)\eta_R(|Y^\epsilon(s)-Z^\epsilon(s)|^2)]ds\\
  &&+\sqrt{\epsilon}\int_0^{\hat{\tau}^\epsilon}\big[\bar\phi^{\prime}_\delta(e^{-\int_0^s|h^\epsilon(r)|^2dr}|Y^\epsilon(s)-Z^\epsilon(s)|^2)\cdot e^{-\int_0^s|h^\epsilon(r)|^2dr}\cdot\langle\sigma(s,Y^\epsilon(s)),Y^\epsilon(s)-Z^\epsilon(s)\rangle\big]dB(s).
\end{eqnarray*}
Taking expectation we get 
\begin{eqnarray}\label{eq Tn}
&&\mathbb{E}\phi_\delta(e^{-\int_0^{\hat{\tau}^\epsilon}|h^\epsilon(s)|^2ds}|Y^\epsilon(\hat{\tau}^\epsilon)-Z^\epsilon(\hat{\tau}^\epsilon)|^2)\nonumber\\
&\leq&\mathbb{E}\int_0^{\hat{\tau}^\epsilon}\phi^{\prime}_\delta(e^{-\int_0^s|h^\epsilon(r)|^2dr}|Y^\epsilon(s)-Z^\epsilon(s)|^2)\cdot e^{-\int_0^s|h^\epsilon(r)|^2dr} \cdot[\epsilon\|\sigma(s,Y^\epsilon(s))\|^2\nonumber\\
  &&+g(s)\eta_R(|Y^\epsilon(s)-Z^\epsilon(s)|^2)]ds\nonumber\\
&\leq& C\int_0^Tg(s)ds+\frac{\epsilon}{\delta}\mathbb{E}\int_0^{\hat{\tau}^\epsilon}\|\sigma(s,Y^\epsilon(s))\|^2ds.
\end{eqnarray}
The last inequality follows from the property of $\eta_R$ stated in Assumption \ref{ass3}. Hence by taking $\epsilon\rightarrow 0$, we obtain
\begin{equation*}
  \limsup_{\epsilon\rightarrow 0}\mathbb{E}\phi_\delta(e^{-\int_0^{\hat{\tau}^\epsilon}|h^\epsilon(s)|^2ds}|Y^\epsilon(\hat{\tau}^\epsilon)-Z^\epsilon(\hat{\tau}^\epsilon)|^2)\leq C\int_0^Tg(s)ds,
\end{equation*}
that is,
\begin{equation}\label{45}
 \limsup_{\epsilon\rightarrow 0}\mathbb{E}\int_0^{e^{-\int_0^{\hat{\tau}^\epsilon}|h^\epsilon(s)|^2ds}|Y^\epsilon(\hat{\tau}^\epsilon)-Z^\epsilon(\hat{\tau}^\epsilon)|^2}\frac{1}{\eta_R(u)+\delta}du\leq C\int_0^Tg(s)ds.
\end{equation}
Since for  $\eta\leq \epsilon_0$,
\begin{equation}\label{46}
\int_0^{e^{-N}\eta}\frac{1}{\eta_R(s)+\delta}ds\PP\big(|Y^\epsilon(\hat{\tau}^\epsilon)-Z^\epsilon(\hat{\tau}^\epsilon)|^2\geq \eta\big)
\leq\mathbb{E}\int_0^{e^{-\int_0^{\hat{\tau}^\epsilon}|h^\epsilon(s)|^2ds}|Y^\epsilon(\hat{\tau}^\epsilon)-Z^\epsilon(\hat{\tau}^\epsilon)|^2}
\frac{1}{\eta_R(s)+\delta}ds, \end{equation}
we assert, by (\ref{24}) and (\ref{45}), that
\begin{equation}\label{47}
 \limsup_{\epsilon\rightarrow 0}\PP\big(|Y^\epsilon(\hat{\tau}^\epsilon)-Z^\epsilon(\hat{\tau}^\epsilon)|^2\geq \eta\big)
 \leq
 \lim_{\delta\downarrow 0}  \frac{C\int_0^Tg(s)ds}{\int_0^{e^{-N}\eta }\frac{1}{\eta_R(s)+\delta}ds}
 =
 0.
\end{equation}

Noting that
$$\{\sup_{s\leq T\wedge\tau^\epsilon_R\wedge\tau^\epsilon_{p}}|Y^\epsilon(s)-Z^\epsilon(s)|^2\geq p\}\subset \{|Y^\epsilon(\hat{\tau}^\epsilon)-Z^\epsilon(\hat{\tau}^\epsilon)|^2\geq p\},$$
it follows from (\ref{47}) that
\begin{equation}\label{48}
\lim_{\epsilon \rightarrow 0}\PP\big(\sup_{s\leq T\wedge\tau^\epsilon_R\wedge\tau^\epsilon_{p}}|Y^\epsilon(s)-Z^\epsilon(s)|^2\geq p\big)=0.
\end{equation}
Now  \eqref{43} follows from the inclusion:
\begin{equation*}
\{\tau^\epsilon_{p}\leq T\wedge\tau^{\epsilon}_R\}\subset \{\sup_{s\leq T\wedge\tau^\epsilon_R\wedge\tau^\epsilon_{p}}|Y^\epsilon(s)-Z^\epsilon(s)|^2\geq p\}.
\end{equation*}
\vskip 0.1cm

Next, we prove
\begin{equation}\label{50}
\lim_{R\rightarrow\infty}\sup_{\epsilon\in(0,1)}\PP\big(\tau^\epsilon_R \leq T\wedge\tau^\epsilon_{p}\big)=0.
\end{equation}

Let $\varphi(x)$ be defined as that in Proposition \ref{prop3} and denote $W^{h^\epsilon}(t)=e^{-\eta\int_0^{t}|h^\epsilon(s)|^2ds}V(Y^\epsilon(t))$. Similar to (\ref{eq Tn}), applying the It\^{o}-Tanaka formula gives
\begin{eqnarray}
 &&\mathbb{E}\varphi(W^{h^\epsilon}(T\wedge\tau^\epsilon_R\wedge\tau^\epsilon_{p}))\nonumber\\
 &\leq& \varphi(V(x_0))+\mathbb{E}\int_0^{T\wedge\tau^\epsilon_R\wedge\tau^\epsilon_{p}} \varphi^\prime(W{^{h^\epsilon}}(s))\cdot e^{-\eta\int_0^s|h^\epsilon(r)|^2dr} \cdot\big[-\eta|h^\epsilon(s)|^2V(Y^\epsilon(s))\nonumber\\
 &&+\langle V_x(Y^\epsilon(s)),b(s,Y^\epsilon(s))\rangle+\langle V_x(Y^\epsilon(s)),\sigma(s,Y^\epsilon(s))\cdot h^\epsilon(s)\rangle\nonumber\\
 &&+\epsilon\cdot\text{trace}\big(V_{xx}(Y^\epsilon(s))\sigma(s,Y^\epsilon(s))\sigma^T(s,Y^\epsilon(s))\big)\big]ds\nonumber\\
  &\leq& \varphi(V(x_0))+\mathbb{E}\int_0^{T\wedge\tau^\epsilon_R\wedge\tau^\epsilon_{p}} \varphi^\prime(W{^{h^\epsilon}}(s))\cdot e^{-\eta\int_0^s|h^\epsilon(r)|^2dr} \cdot\big[-\eta|h^\epsilon(s)|^2V(Y^\epsilon(s))\nonumber\\
 &&+\langle V_x(Y^\epsilon(s)),b(s,Y^\epsilon(s))\rangle\nonumber+\frac{|\langle\sigma(s,Y^\epsilon(s)),V_x(Y^\epsilon(s))\rangle|}{\sqrt{\eta V(Y^\epsilon(s))}}\cdot \sqrt{\eta V(Y^\epsilon(s))}|h^\epsilon(s)|\big]ds\nonumber\\
 &&+\epsilon\cdot\text{trace}\big(V_{xx}(Y^\epsilon(s))\sigma(s,Y^\epsilon(s))\sigma^T(s,Y^\epsilon(s))\big)\nonumber\\
 &\leq&\varphi(V(x_0))+\mathbb{E}\int_0^{T\wedge\tau^\epsilon_R\wedge\tau^\epsilon_{p}} \varphi^\prime(W{^{h^\epsilon}}(s))\cdot e^{-\eta\int_0^s|h^\epsilon(r)|^2dr} \cdot\big[\langle V_x(Y^\epsilon(s)),b(s,Y^\epsilon(s))\rangle\nonumber\\
&&+\frac{|\langle\sigma(s,Y^\epsilon(s)),V_x(Y^\epsilon(s))\rangle|^2}{\eta V(Y^\epsilon(s))}+\epsilon\cdot\text{trace}\big(V_{xx}(Y^\epsilon(s))\sigma(s,Y^\epsilon(s))\sigma^T(s,Y^\epsilon(s))\big)\big]ds\nonumber\\
 &\leq& \varphi(V(x_0))+\mathbb{E}\int_0^{T\wedge\tau^\epsilon_R\wedge\tau^\epsilon_{p}} \varphi^\prime(W^{h^\epsilon}(s))\cdot e^{-\eta\int_0^s|h^\epsilon(r)|^2dr} \cdot f(s)\big(1+\gamma(V(Y^{\epsilon}(s)))\big)ds\nonumber\\
 &\leq& \varphi(V(x_0))+C\int_0^Tf(s)ds.\label{3-4}
\end{eqnarray}
The last inequality follows from the property of $\gamma$ in Assumption \ref{ass3}. By \eqref{3-4} and the definition of $\varphi$, we deduce that
\begin{equation*}
\PP\big(\tau^\epsilon_R\leq T\wedge\tau^\epsilon_p\big)\leq\frac{\varphi(V(x_0))+C\int_0^Tf(s)ds}{\int_0^{e^{-\eta N}\cdot V(R)}\frac{1}{\gamma(s)+1}ds}.
\end{equation*}
Finally, by (\ref{As b V}) and (\ref{26}), letting $R\rightarrow\infty$, we obtain \eqref{50}.

Now  we are in the position to complete the proof of the  theorem.

For arbitrary $\delta_1>0$, we have
\begin{eqnarray*}
&&\PP\big(\sup_{0\leq s\leq T}|Y^\epsilon(s)-Z^\epsilon(s)|\geq\delta_1\big)\\
&=&\PP\big(\sup_{0\leq s\leq T}|Y^\epsilon(s)-Z^\epsilon(s)|\geq\delta_1,\tau^\epsilon_R\wedge\tau^\epsilon_{\delta_1^2}>T\big)\\
&&+\PP\big(\sup_{0\leq s\leq T}|Y^\epsilon(s)-Z^\epsilon(s)|\geq\delta_1,\tau^\epsilon_R\wedge\tau^\epsilon_{\delta_1^2}\leq T\big)\\
&=&\PP\big(\sup_{0\leq s\leq T}|Y^\epsilon(s)-Z^\epsilon(s)|\geq\delta_1,\tau^\epsilon_R\wedge\tau^\epsilon_{\delta_1^2}>T\big)\\
&&+\PP\big(\sup_{0\leq s\leq T}|Y^\epsilon(s)-Z^\epsilon(s)|\geq\delta_1,\tau^\epsilon_R\leq T\wedge\tau^\epsilon_{\delta_1^2}\big)\\
&&+\PP\big(\sup_{0\leq s\leq T}|Y^\epsilon(s)-Z^\epsilon(s)|\geq\delta_1,\tau^\epsilon_{\delta_1^2}\leq \tau^\epsilon_R\wedge T\big)\\
&\leq&\PP\big(\sup_{0\leq s\leq T\wedge\tau^\epsilon_R\wedge\tau^\epsilon_{\delta_1^2}}|Y^\epsilon(s)-Z^\epsilon(s)|^2\geq \delta_1^2\big)\\
&&+\PP\big(\tau^\epsilon_R\leq T\wedge\tau^\epsilon_{\delta_1^2}\big)+\PP\big(\tau^\epsilon_{\delta_1^2}\leq \tau^\epsilon_R\wedge T\big).
\end{eqnarray*}
\eqref{43} and \eqref{48}(with $p=\delta_1^2$) imply that, for any $R>2L$,
\begin{eqnarray*}
\lim_{\epsilon\rightarrow 0}\PP\big(\sup_{0\leq s\leq T}|Y^\epsilon(s)-Z^\epsilon(s)|\geq\delta_1\big)
\leq
\sup_{\epsilon\in(0,1)}\PP\big(\tau^\epsilon_R\leq T\wedge\tau^\epsilon_{\delta_1^2}\big).
\end{eqnarray*}
Let $R\rightarrow \infty$ to get
\begin{equation*}
\lim_{\epsilon\rightarrow0}\PP\big(\sup_{0\leq s\leq T}|Y^\epsilon(s)-Z^\epsilon(s)|\geq\delta_1\big)=0.
\end{equation*}

The proof is complete.
\end{proof}
\section{Applications and examples}
In this part, we present some applications. The two Assumptions \ref{ass0} and \ref{ass3} are all satisfied by the following examples. Therefore, the large deviation principle holds for the models.

\noindent\textbf{Example 5.1.}
The following one-dimensional SDE is also considered in \cite{wu}:
\begin{equation}\label{51}
dx(t)=-x^{\frac{1}{3}}(t)dt+x^{\frac{2}{3}}(t)dB(t).
\end{equation}
Note that both the drift term and the diffusion term are H\"{o}lder continuous.
Indeed,
\begin{eqnarray}
&&2\langle x-y, b(x) -b(y)\rangle + \| \sigma(x)- \sigma(y)\|^2 \notag \\
&=& (x^{\frac{2}{3}}-y^{\frac{2}{3}})^2 - 2(x-y)(x^{\frac{1}{3}}-y^{\frac{1}{3}}) \notag \\
&=& (x^{\frac{1}{3}}-y^{\frac{1}{3}})^2 \Big[ (x^{\frac{1}{3}}+y^{\frac{1}{3}})^2 - 2(x^{\frac{1}{3}}+x^{\frac{1}{3}}y^{\frac{1}{3}}+ y^{\frac{1}{3}})\Big]\notag \\
&=& - (x^{\frac{1}{3}}-y^{\frac{1}{3}})^2 (x^{\frac{2}{3}}+y^{\frac{2}{3}}) \notag \\
&\leq& \et_R(|x-y|^2).
\end{eqnarray}
If we take $\delta=1$, $\eta=4$ and $V(x)=x^2$ in Assumption \ref{ass2}, then
 \begin{equation*}
   \langle b(s,x),V_x(x)\rangle+\frac{\delta}{2}trace\big(V_{xx}(x)\sigma(s,x)\sigma^T(s,x)\big)+\frac{|\langle\sigma(s,x),V_x (x)\rangle|^2}{\eta V(x)} = 0\leq \ga(|x|^2).
 \end{equation*}
So Assumption \ref{ass1}, \ref{ass2} holds. Hence, Theorem \ref{thm} holds.

\noindent\textbf{Example 5.2.}
The following multi-dimensional SDEs are also considered in \cite[example 171]{R-SITU}:
\begin{equation}\label{53}
dx(t)=-x(t)|x(t)|^{-\alpha}dt+\sigma(x(t))dB(t),
\end{equation}
where $\alpha\in(0,1),~\sigma$ is local Lipschitz continuous. It is easy to check that the function $x|x|^{-\al}$ is not locally Lipschitz. However it satisfies
\begin{eqnarray*}
  \langle x-y,-x|x|^{-\al}+y|y|^{-\al}\rangle&=& -|x|^{2-\al}+\langle y,x|x|^{-\al}\rangle+ \langle x,y|y|^{-\al}\rangle-|y|^{2-\al}\\
  &\leq&-|x|^{2-\al}-|y|^{2-\al}+|y||x|^{1-\al}+|x||y|^{1-\al}\\
  &=&(|x|-|y|)(|y|^{1-\al}-|x|^{1-\al})\leq 0.
\end{eqnarray*}
Thus, if we consider an equation of the form (\ref{53}) with any locally Lipschitz coefficient $\sigma$ which satisfies Assumption \ref{ass2}, then all the conditions in Theorem \ref{thm} are ensuerd.

\noindent\textbf{Example 5.3.} (Stochastic Duffing-van der Pol oscillator model)
The Duffing-van der Pol equation unifies both the Duffing equation and the van der Pol equation and has been used for example in certain flow-induced structural vibration problems \cite{holmes}. Cox \& Hutzenthaler \& Jentzen has considered the following more general stochastic model in \cite{jentzen}:
\begin{eqnarray*}
\ddot{X}_t^{x,1}&=& \alpha_2\dot{X}_t^{x,1}-\alpha_1X_t^{x,1}-\alpha_3(X_t^{x,1})^2\dot{X}_t^{x,1}-(X_t^{x,1})^3+g(X_t^{x,1})\dot{W}_t,  \\
X^{x,1}_0&=&x_1,~~\dot{X}^{x,1}_0~=~x_2,
\end{eqnarray*}
where $\alpha_1,~\alpha_2,~\alpha_3\in(0,\infty)$. Here we assume $|g(x)|^2\leq \eta_0+\eta_1|x|^4$, $\eta_0,~\eta_1>0$.

By defining $\dot{X}_t^{x,1}=X_t^{x,2}$, then the above equation can be transformed equivalently into the following SDEs:
\begin{eqnarray*}
dX^{x,1}_t&=&X_t^{x,2}dt,\\
dX_t^{x,2}&=& \big[\alpha_2X_t^{x,2}-\alpha_1X_t^{x,1}-\alpha_3(X_t^{x,1})^2X_t^{x,2}-(X_t^{x,1})^3\big]dt+g(X_t^{x,1})dW_t,\\
X^{x,1}_0&=&x_1,~~X^{x,2}_0~=~x_2.
\end{eqnarray*}

 For $x=(x_1,x_2)\in \mathbb{R}^2$, set $b(s,x)=(x_2,\alpha_2x_2-\alpha_1x_1-\alpha_3(x_1)^2x_2-(x_1)^3)^T$ and $\sigma(s,x)=(0,g(x_1))^T$. Since $b$ and $\sigma$ are local Lipschitz continuous, Assumption \ref{ass1} holds if $\eta_R(s)=L_R s$, where $L_R$ is a constant only dependent on $R$. Define $V(x)=\frac{(x_1)^4}{2}+\alpha_1(x_1)^2+(x_2)^2$ and set $\delta=\eta=1$ in Assumption \ref{ass2}, then
 \begin{eqnarray*}
   &&\langle b(s,x),V_x(x)\rangle+\frac{1}{2}trace\big(V_{xx}(x)\sigma(s,x)\sigma^T(s,x)\big)+\frac{|\langle\sigma(s,x),V_x (x)\rangle|^2}{V(x)}\\
   &=&x_2(2(x_1)^3+2\alpha_1x_1)+2x_2(\alpha_2x_2-\alpha_1x_1-\alpha_3(x_1)^2x_2-(x_1)^3)+|g(x_1)|^2\\
   &&+\frac{4(x_2)^2|g(x_1)|^2}{\frac{(x_1)^4}{2}+\alpha_1(x_1)^2+(x_2)^2}\\
   &=&2\alpha_2(x_2)^2-2\alpha_3(x_1)^2(x_2)^2+|g(x_1)|^2+\frac{4(x_2)^2|g(x_1)|^2}{\frac{(x_1)^4}{2}+\alpha_1(x_1)^2+(x_2)^2}\\
   &\leq&\eta_0+2\alpha_2(x_2)^2+\eta_1(x_1)^4+\frac{4\eta_0(x_2)^2+4\eta_1(x_1)^4(x_2)^2}{\frac{(x_1)^4}{2}+\alpha_1(x_1)^2+(x_2)^2}\\
   &\leq&5\eta_0+(8\eta_1+2\alpha_2)(x_2)^2+\eta_1(x_1)^4\\
   &\leq&K(1+V(x)),
 \end{eqnarray*}
 where $K=5\eta_0+10\eta_1+2\alpha_2$. Hence Assumption \ref{ass2} also holds by taking $\gamma(s)=s$.
Then, Theorem \ref{thm} holds.

\noindent\textbf{Example 5.4.} (Stochastic SIR model)
The SIR model from epidemiology for the total number of susceptible, infected and recovered individuals has been introduced by Anderson \& May \cite{anderson}. Here we consider the following stochastic SIR model:
\begin{eqnarray*}
dX^{x,1}_t&=&(-\alpha X^{x,1}_tX^{x,2}_t-\kappa X^{x,1}_t+\kappa)dt-\beta X^{x,1}_tX^{x,2}_tdW^1_t,\\
dX_t^{x,2}&=& (\alpha X^{x,1}_tX^{x,2}_t-(\gamma+\kappa)X^{x,2}_t)dt+\beta X^{x,1}_tX^{x,2}_tdW^2_t,\\
dX_t^{x,3}&=& (\gamma X^{x,2}_t-\kappa X^{x,3}_t)dt,\\
X^{x,1}_0&=&x_1,~~X^{x,2}_0~=~x_2,~~X^{x,3}_0~=~x_3,
\end{eqnarray*}
where $\alpha,~\beta,~\gamma,~\kappa\in(0,\infty)$ and $x=(x_1,x_2,x_3)\in[0,\infty)^3$.

For $x=(x_1,x_2,x_3) \in [0,\infty)^3$, set $b(s,x)=(-\alpha x_1x_2-\kappa x_1+\kappa,\alpha x_1x_2-(\gamma+\kappa)x_2,\gamma x_2-\kappa x_3)^T$ and $\sigma(s,x)=(-\beta x_1x_2,\beta x_1x_2,0)^T$. Since $b$ and $\sigma$ are local Lipschitz continuous, Assumption \ref{ass1} holds if $\eta_R(s)=L_R s$.
So to verify the Theorem \ref{thm}, we need only to verify Assumption \ref{ass2}.

Define $V(x)=(x_1+x_2-1)^2$ and let $\delta$ and $\eta$ be any positive constants in Assumption \ref{ass2}. A direct calculation gives that
\begin{equation*}
\langle b(s,x),V_x(x)\rangle+\frac{\delta}{2}trace\big(V_{xx}(x)\sigma(s,x)\sigma^T(s,x)\big)+\frac{|\langle\sigma(s,x),V_x (x)\rangle|^2}{\eta V(x)}\leq \frac{\gamma}{2}.
\end{equation*}
So Assumption \ref{ass2} holds.

\noindent\textbf{Example 5.5.} (Stochastic Lotka-Volterra (LV) systems) The well-known Lotka-Volterra systems play an important role in population dynamics, game theory, and so on (see \cite{H-S}). Here, we consider the three-dimensional Stratonovich stochastic competitive LV systems:
\begin{eqnarray*}
dy_1 &=& y_1(r-a_{11}y_1 - a_{12}y_2-a_{13}y_3)dt + \sigma \circ y_1 dB(t), \\
dy_2 &=& y_2(r-a_{21}y_1 - a_{22}y_2-a_{23}y_3)dt + \sigma \circ y_2 dB(t), \\
dy_3 &=& y_3(r-a_{31}y_1 - a_{32}y_2-a_{33}y_3)dt + \sigma \circ y_3 dB(t),
\end{eqnarray*}
where $r>0$, $a_{ij} >0$, $i, j =1, 2, 3$ and initial data $(y_1(0), y_2(0), y_3(0))\in(0,+\infty)^3$. According to \cite[Theorem 3.2]{CDJNZ}, we obtain $y(t) = (y_1(t), y_2(t), y_3(t)) \in (0, +\infty)^3$ for all $t>0$.

The coefficients of above equation are local Lipschitz continuous, so Assumption \ref{ass1} holds.
If $\min\{a_{11},a_{22},a_{33}\} > \frac{\sigma^2}{2}$, then Assumption \ref{ass2} holds with $\ga(s) = s$ and $V(y) = |y|^2$.
So, all the conditions in Theorem \ref{thm} are satisfied.
\vskip 0.5cm
\noindent{\bf Acknowledgement}. This work is partially  supported by NSFC (No. 12131019, 11971456, 11721101).

\end{document}